\DeclarePairedDelimiter{\ceil}{\lceil}{\rceil}
\DeclarePairedDelimiter{\floor}{\lfloor}{\rfloor}
\newcommand{\N}{{\rm I\!N}}
\newcommand\blfootnote[1]{%
 \begingroup
 \renewcommand\thefootnote{}\footnote{#1}%
 \addtocounter{footnote}{-1}%
 \endgroup
}
\newcommand{\modul}[2]{ #1\equiv #2\,\, (\textrm{mod }2) }
\newcommand{\nmodul}[2]{ #1\not\equiv #2\,\, (\textrm{mod }2) }
\newtheorem{theorem}{Theorem}
\newtheorem{corollary}{Corollary}
\newtheorem{lemma}{Lemma}
\newtheorem{conjecture}{Conjecture}
\newtheorem{open}{Open problem}
\begin{document}

\title{Caterpillars are Antimagic}

\author{Antoni Lozano\thanks{Computer Science Department,
Universitat Polit\`ecnica de Catalunya, Spain, {\tt antoni@cs.upc.edu}} \and
Merc\`e Mora\thanks{Mathematics Department, Universitat Polit\`ecnica de Catalunya, Spain, {\tt
merce.mora@upc.edu}}
\and Carlos Seara\thanks{Mathematics Department, Universitat Polit\`ecnica de Catalunya, Spain, {\tt
carlos.seara@upc.edu}}
\and Joaqu\'in Tey\thanks{Math. Department, Universidad Aut\'onoma Metropolitana-Iztapalapa, M\'exico, {\tt jtey@xanum.uam.mx}}}

\date{}

\maketitle

\blfootnote{\begin{minipage}[l]{0.3\textwidth} \includegraphics[trim=10cm 6cm 10cm 5cm,clip,scale=0.15]{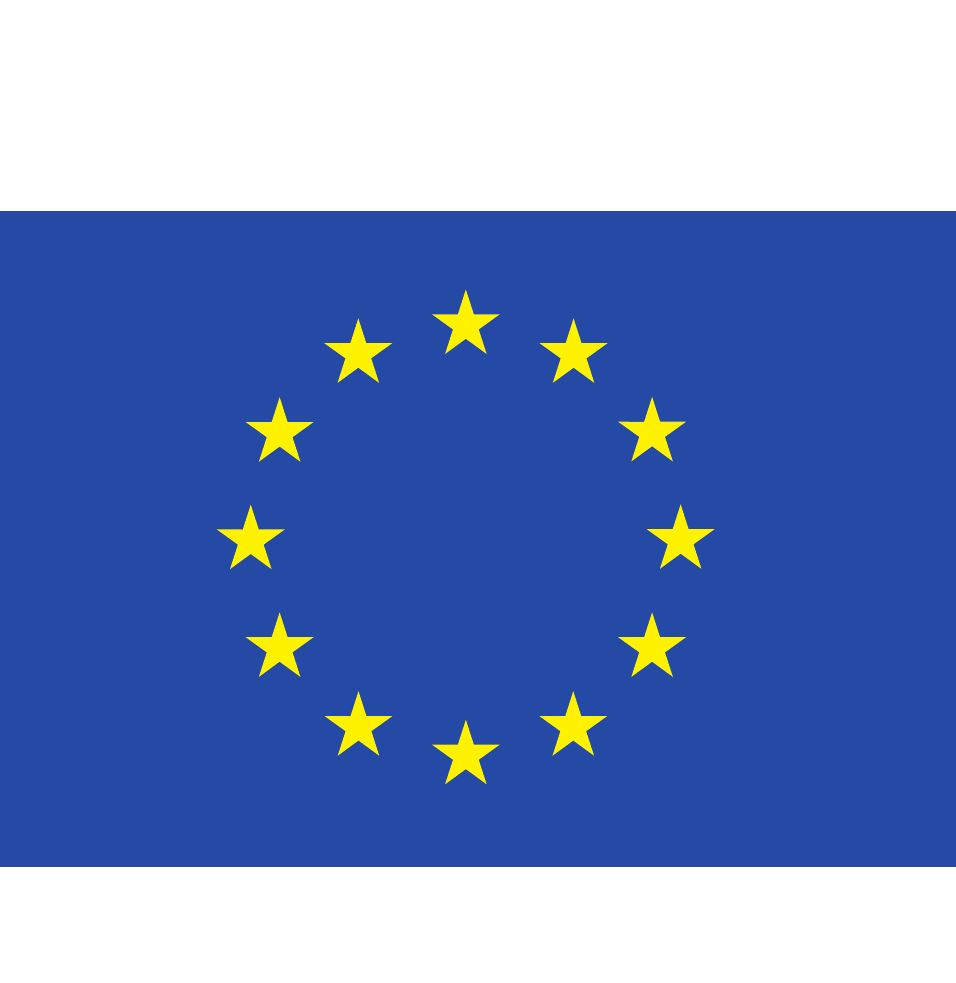} \end{minipage}  \hspace{-2cm} \begin{minipage}[l][1cm]{0.7\textwidth}
This project has received funding from the European Union's Horizon 2020 research and innovation programme under the Marie Sk\l{}odowska-Curie grant agreement No 734922.
\end{minipage}}


\begin{abstract}\noindent
An {\em antimagic labeling} of a graph $G$ is a bijection from the set of edges $E(G)$ to $\{1,2,\dots,|E(G)|\}$ such that all vertex sums are pairwise distinct, where the \emph{vertex sum} at vertex $u$ is the sum of the labels assigned to the edges incident to $u$. A graph is called \emph{antimagic} when it has an antimagic labeling. Hartsfield and Ringel conjectured that every simple connected graph other than $K_2$ is antimagic and the conjecture remains open even for trees. Here we prove that caterpillars are antimagic by means of an $O(n \log n)$ algorithm. 
\end{abstract}

\section{Introduction}\label{sec:int}

All graphs considered in this paper are finite, undirected, connected, and simple. Given a graph $G$, we denote its set of vertices by $V(G)$ and its set of edges by $E(G)$. For every vertex $v\in V(G)$, we denote by $E_G(v)$ the set of edges incident to $v$ in $G$. The degree of a vertex $v\in V(G)$ is $d_G(v)=|E_G(v)|$  (we will just write $d(v)$ when $G$ is clear from context). For undefined terminology about basic graph theory we refer the reader to~\cite{ChLZ}.

An {\em (edge) labeling} of a graph $G$ is an injection from $E(G)$ to the set of nonnegative integers. A labeling of $G$ is called \emph{antimagic} if it is a bijection $\phi:E(G)\rightarrow \{1,2,\dots,|E(G)|\}$ such that all vertex sums are pairwise distinct, where the \emph{vertex sum} at vertex $u$ is $\sum_{e \in E_G(u)} \phi(e)$. A graph is called \emph{antimagic} if it has an antimagic labeling. The following conjecture from Hartsfield and Ringel~\cite{HR} is well known.

\begin{conjecture}~\cite{HR}\label{conj1}
Every connected graph other than $K_2$ is antimagic.
\end{conjecture}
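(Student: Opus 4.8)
The statement is the Hartsfield--Ringel conjecture itself, which remains open in full generality; what follows is therefore a strategy for attacking Conjecture~\ref{conj1} rather than a claim that it yields to a short argument. The plan is to stratify connected graphs by how tightly their degree sequence constrains the vertex sums, because the obstruction to distinctness lives almost entirely at the low-degree vertices. In the \emph{dense regime}, where every vertex has degree at least $c\log n$ for a suitable constant, I would use the probabilistic method: assign the labels $\{1,\dots,m\}$ to the edges at random, bound the probability that a fixed pair of vertices receives equal sums, and apply a union bound over the $\binom{n}{2}$ pairs. When the degrees are large, each vertex sum is a sum of many label choices and is therefore anticoncentrated, so the collision probability is $o(n^{-2})$ and a good labeling exists. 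This reproduces the known results of Alon and coauthors and, together with the known antimagic labelings of regular graphs, disposes of all sufficiently dense graphs.

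The heart of the problem is the \emph{sparse regime}, and the decisive subcase is that of trees, since a general sparse connected graph should be the easier half: a cycle supplies extra edges whose labels can be rerouted to repair a clash, so I would fix a spanning tree and treat the remaining edges as a controlled perturbation. For a tree $T$ I would root it and build the labeling from the leaves inward, assigning labels greedily so that, level by level, the partial vertex sums stay inside prescribed disjoint windows of integers; this is exactly the mechanism that succeeds for caterpillars, where the spine imposes a linear order that keeps the windows separated. To push beyond caterpillars one would replace the single spine by a recursive decomposition of $T$ (for instance a heavy-path or centroid decomposition), prove that each piece admits a labeling whose leaf-sums and internal-sums occupy a reserved range, and then stitch the pieces together by an order-preserving relabeling of $\{1,\dots,m\}$ that respects all the reserved ranges at once.

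The main obstacle is precisely the reason the conjecture is still open: the constraint is global. The labels form a single bijection onto $\{1,\dots,m\}$, so the windows chosen on different branches cannot be selected independently, and a label forced to be large at one leaf is removed from the pool everywhere else. Degree-one vertices are the worst offenders, since a leaf's vertex sum \emph{is} a single edge label, leaving no averaging to smooth out collisions, and in a tree there is no spare edge to reroute when two sums coincide. Consequently the delicate step is the simultaneous scheduling of the reserved ranges across all branches so that every vertex sum is distinct, and it is here that a clean inductive argument breaks down: the caterpillar case proved in this paper handles the situation where a single linear spine trivializes the scheduling, while the general case appears to need a genuinely new idea for coordinating the windows of arbitrarily branching trees.
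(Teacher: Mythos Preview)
The statement you were asked to prove is Conjecture~\ref{conj1}, the Hartsfield--Ringel conjecture, and the paper does \emph{not} prove it: it is quoted from~\cite{HR} as an open problem, and the paper's contribution is only the special case of caterpillars (Theorem~\ref{th:caterpillars}). So there is no ``paper's own proof'' to compare your attempt against.

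You recognise this yourself in your first sentence, and what you submit is explicitly a research programme rather than a proof. As such it cannot be assessed as a proof of the conjecture: it has the obvious gap that it does not establish the claim, and you say so. Your outline of the dense regime via anticoncentration and of the sparse/tree regime via a spine or heavy-path decomposition is a fair summary of the known landscape and of where the difficulty lies, and your identification of leaves and the global bijection constraint as the bottleneck is accurate. But none of this advances beyond the state of the art described in the paper's introduction, and in particular the key step you flag---simultaneously scheduling label windows across arbitrarily branching subtrees---is exactly the missing idea; you do not supply it. In short: your write-up is an honest account of why Conjecture~\ref{conj1} is open, not a proof, and the paper offers nothing more on this statement either.
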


Classes of graphs which are known to be antimagic include: paths, stars, complete graphs, cycles, wheels, and complete bipartite graphs $K_{2,m}$~\cite{HR}; graphs of order $n$ with maximum degree at least $n-3$~\cite{Y}; dense graphs (i.e., graphs having minimum degree $\Omega(\log n)$) and complete partite graphs but $K_2$~\cite{AKLRY,E}; toroidal grid graphs \cite{W}; lattice grids and prisms~\cite{Ch07}; regular bipartite graphs~\cite{C}; odd degree regular graphs~\cite{CLZ}; even degree regular graphs~\cite{ChLPZ}; cubic graphs~\cite{LZ}; generalized pyramid graphs~\cite{AMPR}; graph products~\cite{WH}; and Cartesian product of graphs~\cite{Ch08,ZS} (see the dynamic survey \cite{G} for more details on antimagic labelings). However, the conjecture is still open for the general class of trees.

\begin{conjecture}~\cite{HR}\label{conj2}
Every tree other than $K_2$ is antimagic.
\end{conjecture}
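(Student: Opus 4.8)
The plan is to prove the statement by an induction that reduces a general tree to the caterpillar case established in this paper. First I would root $T$ at a leaf $r$, so that every edge becomes the parent-edge $p(v)$ of a unique non-root vertex $v$; this identifies $E(T)$ with $V(T)\setminus\{r\}$ and lets me write each internal vertex sum as $\phi(p(v))+\sum_{c}\phi(p(c))$, summed over the children $c$ of $v$. The decisive simplification is that every leaf has sum equal to a single label, so leaf sums are automatically pairwise distinct; the entire difficulty is therefore confined to the internal (branch) vertices, and the goal becomes: choose $\phi$ so that no two internal vertices, and no internal vertex and leaf, receive the same sum. A useful guiding heuristic is to push large labels onto edges incident to high-degree vertices, nudging their sums upward and away from the small sums forced at the leaves.

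Next I would decompose $T$ into vertex-disjoint paths by a heavy-path (or longest-path) decomposition, so that each path together with the leaves hanging directly off it forms a caterpillar-like gadget $C_1,\dots,C_k$, organized into a tree-of-gadgets. Since paths, stars, and caterpillars are antimagic, each $C_j$ admits a good labeling in isolation, and the strategy is to stitch these together: partition $\{1,\dots,n-1\}$ into intervals $I_1,\dots,I_k$, one per gadget ordered from the deepest gadgets up to the one containing $r$, and label the edges of $C_j$ using only $I_j$ via a strengthened form of the caterpillar construction that fixes in advance which labels land on the path edges versus the pendant edges. Within a single gadget the caterpillar argument already separates the internal sums, so if the intervals could be made to separate sums across gadgets as well, the distinctness would follow. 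To align all the gadget labelings I would also maintain through the induction an auxiliary ordering of the internal vertices by a key combining degree and depth, with the aim of forcing the sums to increase monotonically along that order.

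The hard part — and the reason the conjecture remains open — is that this interval separation fails in the presence of widely varying degrees: a high-degree branch vertex sitting in a low interval can accumulate a larger sum than a low-degree vertex in a high interval, so the clean range-by-gadget ordering breaks exactly at the junction vertices where several gadgets meet. Such a vertex collects $\phi(p(v))$ from its own gadget plus one edge label from each child gadget, so its sum genuinely mixes several intervals and cannot be localized. Guaranteeing that all of these mixed sums avoid one another and the within-gadget sums requires a global coordination of the interval boundaries and of the child-edge choices that I do not see how to enforce once many high-degree branch vertices occur at varying depths. A promising but delicate sub-step is to reserve, inside each interval, a few extreme labels for precisely the edges crossing into the parent gadget, so that each junction sum lands in a pre-allocated gap; verifying that all such gaps stay simultaneously disjoint is the combinatorial bottleneck. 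Accordingly, I would expect a complete proof to need either an extra parity or counting invariant carried through the induction or a genuinely new idea beyond the caterpillar construction, and I would first calibrate the reservation scheme on spiders and double-brooms before attempting the general branch structure.
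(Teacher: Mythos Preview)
The statement you are attempting to prove is Conjecture~\ref{conj2}, which the paper explicitly presents as \emph{open}: the authors write that ``the conjecture is still open for the general class of trees'' and only establish the special case of caterpillars (Theorem~\ref{th:caterpillars}). There is therefore no proof in the paper to compare your proposal against; the paper simply does not claim this result.

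As for the proposal itself, you have correctly diagnosed why it is not a proof. The heavy-path decomposition into caterpillar gadgets and the interval-per-gadget labeling scheme run into exactly the obstruction you describe: a junction vertex $v$ where several gadgets meet receives one label from each child gadget, so its sum mixes several intervals $I_j$ and cannot be confined to any single range. The moment two junction vertices have different numbers of children or sit at different depths, there is no a priori reason their mixed sums avoid one another or the within-gadget sums, and your ``reserve extreme labels for crossing edges'' idea only pushes the collision problem one level up without resolving it. Your own honest assessment---that a complete argument would need a new invariant or a genuinely new idea beyond the caterpillar construction---is accurate, and is precisely why Conjecture~\ref{conj2} remains open. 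What you have written is a reasonable research outline, not a proof; it should not be labeled as a proof of the conjecture.
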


One of the best known results for trees is due to Kaplan, Lev, and Roditty~\cite{KLR}, who proved that any tree having more than two vertices and at most one vertex of degree two is antimagic (see also~\cite{LWZ}). In this paper we focus on \emph{caterpillars}, that is, trees of order at least 3 such that the removal of their leaves produces a path. In \cite{LMS} the authors give sufficient conditions for a caterpillar to be antimagic and, recently, it has been shown that that caterpillars with maximum degree 3 are antimagic  \cite{DL}. In this paper we take a step further proving that every caterpillar is antimagic.

\begin{theorem}\label{th:caterpillars}
Caterpillars are antimagic. Furthermore, there exists an algorithm that, given a caterpillar $C$ of order $n$, produces an antimagic labeling for $C$ in time $O(n \log n)$.
\end{theorem}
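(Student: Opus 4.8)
The plan is to exploit the fact that a caterpillar splits cleanly into its spine and its leaves, and to reduce the whole problem to separating the sums of the spine vertices alone. Write the spine as $v_1,\ldots,v_k$ (the path left after deleting all leaves), let $c_i$ be the number of leaves hanging from $v_i$, set $L=\sum_i c_i$ for the total number of leaves, and put $m=|E(C)|=n-1$. First I would dispose of the degenerate case: if $k\le 1$ the caterpillar is a star, which is already known to be antimagic, so assume $k\ge 2$. A crucial structural remark is that each spine endpoint carries at least one leaf (if $v_1$ had none, its only neighbour would be $v_2$, making $v_1$ a leaf that should have been deleted); in particular every spine vertex is incident to at least one spine edge.

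The separation idea is to assign the $L$ smallest labels $\{1,\ldots,L\}$ to the leaf edges and the $k-1$ largest labels $\{L+1,\ldots,m\}$ to the spine edges. Then each leaf has vertex sum equal to its unique edge label, so the leaf sums are exactly $\{1,\ldots,L\}$, hence automatically pairwise distinct; and since every spine vertex meets a spine edge, every spine sum is at least $L+1$, strictly above every leaf sum. I would record this as a short lemma: it removes all leaf--leaf and leaf--spine conflicts at once and reduces Theorem~\ref{th:caterpillars} to the single task of forcing the $k$ spine sums to be pairwise distinct.

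For that task two independent degrees of freedom remain: the permutation of the large labels $\{L+1,\ldots,m\}$ along the spine edges, and the way the small labels $\{1,\ldots,L\}$ are grouped among the leaf-bearing spine vertices (vertex $v_i$ receiving some $c_i$-subset, the subsets partitioning $\{1,\ldots,L\}$). My first move is to place the large labels increasingly, $\phi(v_iv_{i+1})=L+i$; then the bare contribution of an interior vertex $v_i$ is $(L+i-1)+(L+i)=2L+2i-1$, a strictly increasing step-$2$ sequence, so all degree-$2$ spine vertices (those with no leaves) already receive pairwise distinct sums lying in a known, sparse set of values. It then remains to place the leaf-bearing vertices: each such $v_i$ has sum $2L+2i-1+\ell_i$ (or an endpoint analogue), where $\ell_i$ is the sum of the leaf labels I assign to it, and I would fix the partition of $\{1,\ldots,L\}$ greedily, processing the leaf-bearing vertices in a sorted order so that each one lands in a value avoiding the few explicitly described forbidden sums coming from the bare vertices and from the previously placed leaf-bearing vertices.

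I expect the main obstacle to be exactly this last allocation. The degree-$2$ vertices are rigid, offering no leaf freedom, so long bare runs of the spine (a path being the extreme case) force their sums to be whatever the spine-edge labeling dictates, and the leaf-bearing vertices must be fitted around them using only the fixed multiset $\{1,\ldots,L\}$ split into blocks of prescribed sizes $c_i$. Guaranteeing that a valid, collision-free partition always exists, rather than merely that there is room, is the delicate combinatorial core: I anticipate a careful ordering argument (perhaps treating the endpoints and the highest-degree vertices first) together with a counting bound showing that the reachable values of each $\ell_i$ outnumber the forbidden ones. Everything above is a constant number of passes plus one sort of the spine vertices, which yields the claimed $O(n\log n)$ running time.
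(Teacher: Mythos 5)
Your reduction lemma is sound: putting the $L$ smallest labels on leaf edges makes the leaf sums exactly $\{1,\dots,L\}$, and since every spine vertex (for $k\ge 2$) meets a spine edge labeled at least $L+1$, all spine sums exceed all leaf sums. But the proof stops exactly where the theorem lives. The ``delicate combinatorial core'' you name --- the existence of a partition of $\{1,\dots,L\}$ into blocks of prescribed sizes $c_i$ whose block-sums $\ell_i$ avoid all the forbidden values --- is asserted as an expectation (``I anticipate a careful ordering argument \dots together with a counting bound''), not established. And a per-vertex counting bound cannot close it as stated: in any greedy order the last leaf-bearing vertex receives \emph{all} remaining labels, so its $\ell_i$ is forced with zero freedom, and more generally the $\ell_i$ are not independent (they are block-sums of one shared multiset), so ``reachable values outnumber forbidden ones'' at each step does not imply a global collision-free choice. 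The constraints genuinely interlock: with $\phi(v_iv_{i+1})=L+i$ the bare degree-$2$ vertices occupy the rigid values $2L+2r-1$, a leaf-bearing $v_i$ collides with bare $v_r$ precisely when $\ell_i=2(r-i)$, and endpoints obey a different formula ($v_k$ has sum $k+2+\ell_k$, which can land on $2r+5$ for interior bare $r$); already in the small example of a path with one extra middle leaf ($L=3$, three singleton blocks) the admissible choices of $(\ell_1,\ell_t,\ell_k)$ depend on the parities of $k$ and the positions of the bare vertices, and which assignments survive changes case by case. Nothing in the proposal shows this system is always solvable.

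It is instructive to compare with how the paper dodges this trap. The paper works with a longest path (whose endpoints are leaves), and \emph{alternates} small labels ($L_0$) and large labels ($L_1$) along it, rather than making all path labels large; consequently the degree-$2$ sums form a strictly increasing sequence confined to a middle band $[m-\lfloor p/2\rfloor+2,\, m+\lceil p/2\rceil+|U_p|]$, all legs take small labels, and every heavy vertex is pushed \emph{strictly above} that band: if a degree-$3$ path vertex had path-edge sum at most $m$, that sum would lie in $L_1$, hence equal some $\phi(e_{j-1})$, condition $Q(j)$ would fire, and the vertex would have been made light in Step~1. This is the mechanism your scheme lacks --- in your labeling the leaf-bearing spine vertices sit \emph{inside} the same value band as the rigid bare vertices, which is precisely what creates the unproven allocation problem. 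Once the three bands (degree $1$; degree $2$ and light; heavy) are disjoint, the paper only needs within-class distinctness, and for heavy vertices the sorted final-leg assignment (partial sums sorted, remaining labels sorted, matched in order) gives strictly increasing totals for free. If you want to salvage your route, you would need either an analogue of the $Q(i)$ tie-breaking device to lift leaf-bearing vertices out of the bare band, or an honest existence proof for the constrained partition; as written, the argument is a plan with its hardest step open.
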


Concretely, in Section~\ref{section:algorithm} we provide the above mentioned algorithm, whose correctness and time bound are shown in Section~\ref{section:proof}. Therefore, our method follows a constructive approach, in contrast with the use of the Combinatorial NullStellenSatz method \cite{A,H,LM}, which is the regular technique used in several of the references above.

\section{Construction of an Antimagic Labeling}\label{section:algorithm}

\begin{figure}[hb!]
\begin{center}
\includegraphics [width=0.9\textwidth]{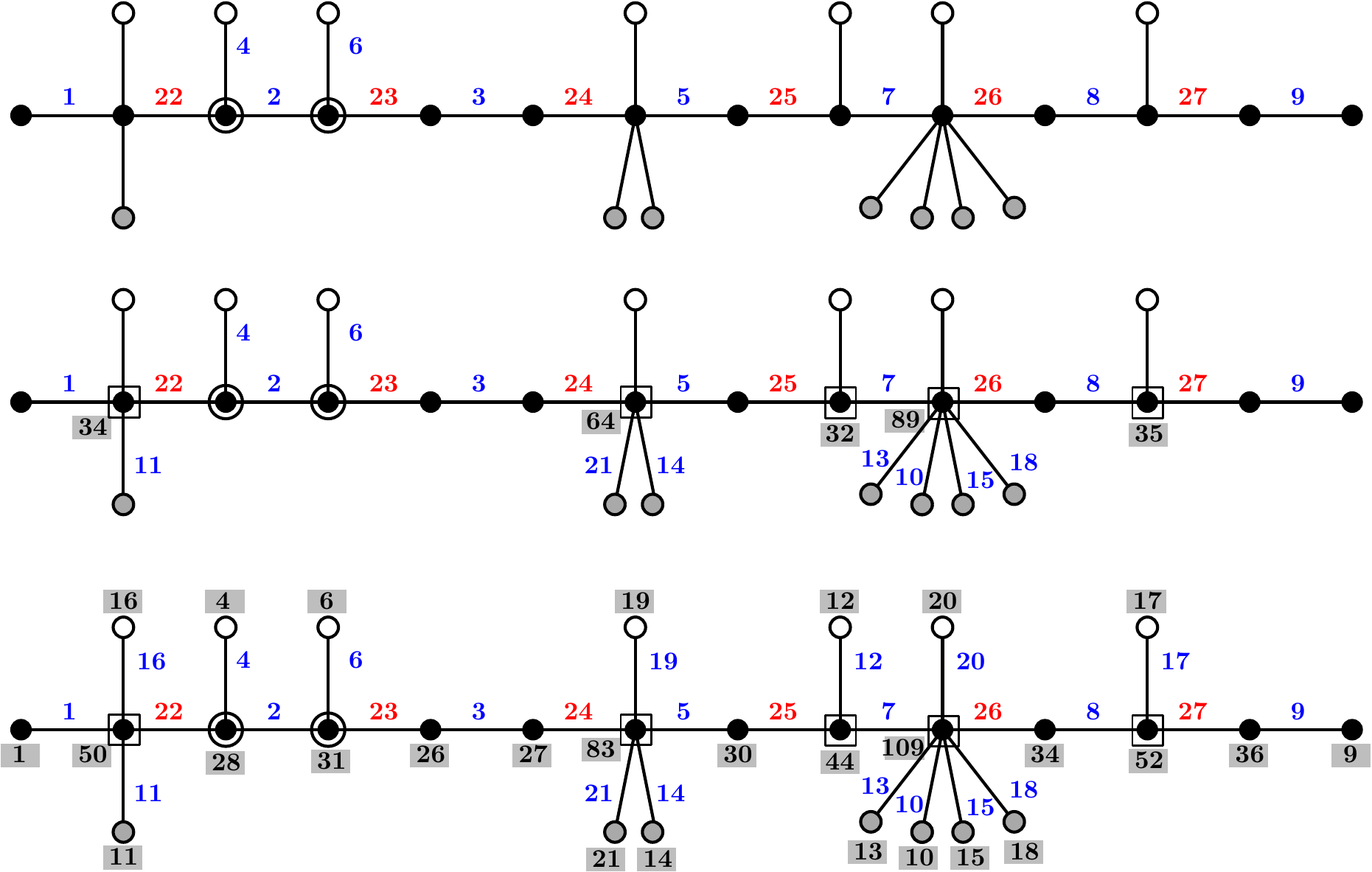}
\caption{Example of a labeling of a caterpillar with a longest path of odd length. In this case, $L_0=[1,21]$ and $L_2=[22,27]$.
Top, Step 1: Labeling the   pathedges and legs incident to light vertices (circled vertices).  
Middle, Step 2: Labeling all legs but one for each heavy vertex (squared vertices). 
Bottom, Step 3: Labeling the last leg for each heavy vertex.
}
\label{fig:exemple}
\end{center}
\end{figure}

Suppose that $C$ is a caterpillar with $m$ edges and  
let $P = (v_0,\dots,v_p)$ be a longest path in $C$.
Edges in $P$ will be called {\em   pathedges}, while edges not in $P$ will be called {\em legs}. We use the notation $e_i = \{v_{i-1},v_i\}$ ($1 \le i \le p$) for   pathedges and $f_i$ for the leg incident to a vertex $v_i$ of degree 3. 
We define $E_0=\{e_i:\modul ip \}$ and $E_1=\{e_i: \nmodul ip \}$, so that  $\{E_0,E_1\}$ is a partition of the set of  pathedges such that $e_p\in E_0$ and any two incident edges of $P$ belong to different sets. The size of these sets is $|E_0|=\ceil{p/2}$ and $|E_1|=\floor{p/2}$. 

Let $[a,b] = \{ n\in \mathbb{N}  : a\le n \le b\}$ and $[a,\infty)=\{n\in \mathbb{N} : n\ge a\}$, where $a$ and $b$ are positive integers. Split the set of available labels $L = [1,m]$ into the two subsets $L_0 = [1,m-\lfloor p/2 \rfloor]$ and $L_1 = [m-\lfloor p/2 \rfloor+1,m]$ of sizes $m-\floor{p/2} $ and $\floor{p/2} $, respectively.

\medskip
We now describe an algorithm in three steps to construct a labeling $\phi$ of $C$ that will be shown to be antimagic in Section~\ref{section:proof} (see the pseudocode in Page~\pageref{algo} and an example in Figure~\ref{fig:exemple}). 
Given a vertex $u$ in $C$ and a labeling $\phi$ of $C$, we denote the vertex sum at $u$ by $s(u)$, that is, $s(u) = \sum_{e \in E_C(u)} \phi(e)$. When we describe the construction of the labeling, we 
also use the notation $s(u)$ to refer to a {\em partial sum} at $u$, that is, the sum of the values $\phi(e)$ for all edges $e$ incident to $u$ for which a label has already been assigned at that step of the algorithm.
Similarly, we use $\phi (E(C))$ to denote the set of labels used up to that step.

\medskip
In the first step, a label is assigned to all pathedges and a few legs incident to vertices of degree 3. 
Roughly speaking, the algorithm alternatively assigns consecutive labels from the lists $L_0$ and $L_1$ to the  pathedges with some exceptions, and  taking into account that the label of edge $e_p$ must belong to $L_0$. Concretely, the edges of $E_1$ receive labels from $L_1$ in increasing order, and the edges of $E_0$ receive labels from $L_0$ in increasing order with the following exception. When assigning a label to the edge $e_i\in E_0$, $1<i\le p$, we check if the following condition holds:
\begin{description}\label{condition}
	\item[\boldmath$Q(i)$:]
	$\phi(e_{i-1})=\phi(e_k)+\phi(e_{k+1})$ for some $k\in \{1,\dots ,i-3\}$ such that $d(v_k)=3$.
\end{description}
If $Q(i)$ does not hold, then we assign the next unused label of $L_0$ to $e_i$. Otherwise, we assign the next unused label of $L_0$ to the leg $f_k$ and the next one, to $e_i$. Such a vertex $v_k$ of degree 3, whose leg $f_k$ has been labeled at this step, will be called {\em light vertex}, while the rest of vertices of degree at least three will be called {\em heavy vertices}. The set of all light vertices obtained when finishing this step is denoted by $U_p$, while the set of heavy vertices is denoted by $W$.  

In the second step, for each heavy vertex $v$, we randomly assign unused labels from $L_0$ to all but one of the legs incident to $v$.

Finally, it only remains to label one leg for each heavy vertex. To do that, in the third step we list the remaining legs in increasing order of the partial vertex sums at their corresponding incident heavy vertices. Then, we sort the remaining labels in $L_0$ in increasing order and assign them to the previous list of legs in the same order.

\begin{algorithm}[ht!]\label{algo}
\caption{Antimagic Labeling of a Caterpillar.}
\ \\ \
\hglue 5pt \textbf{Input:} A caterpillar $C$ of order $n$\\
\hglue 5pt \textbf{Output:} An antimagic labeling $\phi$ for $C$
\begin{algorithmic}[1]
    \Statex
    \Statex \hglue -4.7mm {\sc{\bf STEP 1:} Labeling the   pathedges and a few legs}
    \State $\phi(e_1) \leftarrow 1$     
    \State $\phi(e_2) \leftarrow m-\lfloor p/2 \rfloor+1$
    \State $U_1, U_2 \leftarrow \emptyset$
    \If{$p$ is even}
        \State Exchange $\phi(e_1)$ and $\phi(e_2)$
    \EndIf
    \For{$i = 3$ to $p$}
        \State $\phi(e_i) \leftarrow \phi(e_{i-2})+1$
        \State $U_i \leftarrow U_{i-1}$
        \If{$\modul ip$ and $\phi(e_{i-1}) = \phi(e_k) + \phi(e_{k+1})$        
        \Statex \hglue 1cm for some $k\in \{ 1,\dots ,i-3\}$ s.t. $d(v_k) = 3$}
            \State $\phi(f_k) \leftarrow \phi(e_i)$
            \State $\phi(e_i) \leftarrow \phi(e_i)+1$
            \State $U_i \leftarrow U_{i} \cup \{v_k\}$
        \EndIf
    \EndFor
    \State $W \leftarrow \{ v \in V(C) \mid v \notin U_p \; \wedge \; d(v) \ge 3 \}$
    \Statex
    \Statex \hglue -6mm {\sc{\bf STEP 2:} Labeling all legs except one for each vertex in $W$}
    \ForAll{$v \in W$}
        \ForAll{legs $e$ incident to $v$ except one}
            \State $\phi(e) \leftarrow$ a random label from $L_0 \setminus \phi(E(C))$
        \EndFor
    \EndFor
    \Statex
    \Statex  \hglue -6mm {\sc{\bf STEP 3:} Labeling the last leg of each vertex in $W$}
    \State Sort the vertices in $W$ as $w_1,\dots,w_t$ s.t. $s(w_i) \le s(w_{i+1})$ for all $i < t$
    \State Sort the labels in $L_0 \setminus \phi(E(C))$ as $\ell_1, \dots, \ell_t$ in increasing order
    \For{$i=1$ to $t$}
        \State $\phi(w_i) = \ell_i$
    \EndFor
\end{algorithmic}
\end{algorithm}

\section{Proof of Theorem~\ref{th:caterpillars}}\label{section:proof}

In this section we prove that the labeling produced by Algorithm 1 for a caterpillar of order $n$ is antimagic and can be found in time $O(n \log n)$, thus proving Theorem~\ref{th:caterpillars}. 
The following lemmas will be used in the proofs of correctness and efficiency of the algorithm.

We first introduce some notation. We denote by $F_0$ the set of legs incident to light vertices. Recall that the set $U_i$ introduced in the algorithm contains light vertices, that is, vertices of degree 3 whose legs have been labeled up to iteration $i$ of the \emph{for} loop in Step 1.  Hence, $|F_0|=|U_p|$. Let $E_2$ be the set of edges not belonging to $ E_0\cup E_1\cup F_0$, that is, $\{ E_0,E_1,E_2,F_0\}$ is a partition of the edge set $E(C)$.

\begin{lemma}\label{lemma:values}
For every $i\in \{1,2,\dots ,p\}$ we have
\begin{equation}\label{eq}
	\phi(e_i) =
	\begin{cases*}
	\lceil \frac{i}{2} \rceil + |U_i|, & 
	\mbox{if $e_i\in E_0$;}\\
	m - \lfloor \frac{p}{2} \rfloor + \lceil \frac{i}{2} \rceil, & 
		\mbox{if $e_i\in E_1$.}
	\end{cases*}
	\end{equation}
Moreover,
\begin{enumerate}[(1)]
	\item 	
	$ \phi(E_0 \cup F_0 )=[1,\ceil{p/2}+|U_p| ]\subseteq L_0$;
		
	\item 	
	$\phi(E_1)=L_1=[m-\floor{p/2} +1 , m ]$;	
	
	\item 
	$\phi ( E_2)= [\ceil{p/2}+|U_p|+1,m-\floor{p/2}]\subseteq L_0$.	
\end{enumerate}
\end{lemma}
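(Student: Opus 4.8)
The plan is to first establish the closed-form expression for $\phi(e_i)$ displayed in~\eqref{eq} by induction on $i$, and then read off the three set-equalities (1)--(3) as consequences, combining the formula with the fact that Step~1 always assigns the least unused label of the appropriate list.

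First I would verify the base cases $i=1,2$ directly from the initialization lines of the algorithm, treating the two parities of $p$ separately (the exchange performed when $p$ is even swaps which of $e_1,e_2$ lands in $L_0$ versus $L_1$), and using $U_1=U_2=\emptyset$. For the inductive step with $i\ge 3$ I would split on whether $e_i\in E_1$ or $e_i\in E_0$. When $e_i\in E_1$ the index $i$ has parity different from $p$, so condition $Q(i)$ cannot fire; since $e_{i-2}\in E_1$ as well, the update $\phi(e_i)=\phi(e_{i-2})+1$ together with $\lceil (i-2)/2\rceil=\lceil i/2\rceil-1$ gives the $E_1$ formula at once. When $e_i\in E_0$, I would use that $e_{i-2}\in E_0$ and that $U_{i-1}=U_{i-2}$ (step $i-1$ has parity different from $p$ and hence leaves $U$ unchanged), so that before any increment $\phi(e_i)=\lceil i/2\rceil+|U_{i-2}|$. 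Then I split into the two sub-cases: if $Q(i)$ fails, $|U_i|=|U_{i-2}|$ and the formula holds; if $Q(i)$ holds, the algorithm labels one leg, adds $1$ to $\phi(e_i)$, and simultaneously enlarges $U_i$ by the vertex $v_k$, so both sides grow by exactly $1$.

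The one point that needs a genuine argument rather than mere bookkeeping is that each firing of $Q(i)$ actually increases $|U_i|$, i.e.\ that the selected $v_k$ is not already light. I would deduce this from the injectivity of $\phi$ on $E_1$: if $v_k$ had been added at an earlier step $i'<i$, then $\phi(e_{i'-1})=\phi(e_k)+\phi(e_{k+1})=\phi(e_{i-1})$, but $e_{i'-1}$ and $e_{i-1}$ are distinct edges of $E_1$ and therefore carry distinct labels, a contradiction. Hence every firing adds a fresh vertex and the counting in the induction is exact. This is the step I expect to be the main (though still mild) obstacle.

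Finally I would derive (1)--(3). Since Step~1 assigns to the edges of $E_0$ and to the legs in $F_0$ exactly the successive smallest unused labels of $L_0$ starting from $1$, and there are $|E_0|+|F_0|=\lceil p/2\rceil+|U_p|$ of them, their image is the initial segment $[1,\lceil p/2\rceil+|U_p|]$; the inclusion in $L_0$ follows from $p+|U_p|\le m$, which holds because $U_p$ indexes a subset of the legs and $m=p+(\text{number of legs})$. This gives (1). For (2), the $E_1$ formula shows that $\lceil i/2\rceil$ ranges over $\{1,\dots,\lfloor p/2\rfloor\}$ as $e_i$ runs through $E_1$ (a short check in each parity of $p$), so $\phi(E_1)$ is exactly $L_1$. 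For (3), I would note that after Step~1 the unused labels of $L_0$ are precisely $[\lceil p/2\rceil+|U_p|+1,\,m-\lfloor p/2\rfloor]$, that Steps~2 and~3 assign all of these to the legs of $E_2$, and that the cardinality check $|E_2|=m-\lceil p/2\rceil-\lfloor p/2\rfloor-|U_p|=(m-\lfloor p/2\rfloor)-(\lceil p/2\rceil+|U_p|)$ confirms the match, yielding the claimed interval inside $L_0$.
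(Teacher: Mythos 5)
Your proposal is correct and follows essentially the same route as the paper's proof: the same induction on $i$ with base cases $i=1,2$ split by the parity of $p$, the same case analysis ($e_i\in E_1$; $e_i\in E_0$ with $Q(i)$ failing or firing, using $U_{i-1}=U_{i-2}$), and the same injectivity-plus-cardinality bookkeeping for items (1)--(3). In fact you go slightly beyond the paper by explicitly verifying that a firing of $Q(i)$ always selects a vertex $v_k\notin U_{i-1}$ (via the distinctness of the labels on $E_1$), a point the paper leaves implicit when it asserts $|U_i|=|U_{i-1}|+1$.
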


\begin{proof} 
First we check that the values $\phi(e_1)$ and $\phi(e_2)$ defined in Equation~\ref{eq} are correct. 
Indeed, if $p$ is odd, then $e_1\in E_0$, $e_2\in E_1$, and the labels assigned after the execution of lines 1--5 of the algorithm are
	$\phi (e_1)=1=\ceil{1/2}+|U_1|$ and $\phi(e_2)=m - \lfloor p/2 \rfloor + 1 =m - \lfloor p/2 \rfloor + \lceil 2/2 \rceil$.
	If $p$ is even, then $e_1\in E_1$, $e_2\in E_0$, and the labels assigned in lines 1--5 are
	$\phi (e_1)=m - \lfloor p/2 \rfloor +1=m - \lfloor p/2 \rfloor + \lceil 1/2 \rceil$ and 
	$\phi(e_2)=1=\lceil 2/2 \rceil +|U_2|$.

Now, we show that the value $\phi(e_i)$,  $3 \le i \le p$,  defined in Equation~\ref{eq} corresponds to the same value calculated in the {\em for} loop at lines 6--12. Suppose that $3 \le i \le p$ and all values given in Equation~\ref{eq} up to $\phi(e_{i-1})$ are correct. Then, either $e_i \in E_0$ or $e_i \in E_1$. Suppose first that $e_i \in E_1$. Then, lines 10--12 are not executed and $\phi(e_i) = \phi(e_{i-2})+1$. Since $e_{i-2} \in E_1$ and, by hypothesis, $\phi(e_{i-2}) = m - \lfloor p/2 \rfloor + \big\lceil \frac{i-2}{2} \big\rceil$, we have
\[ \phi(e_i) = m - \Big\lfloor \frac{p}{2} \Big\rfloor + \Big\lceil \frac{i-2}{2} \Big\rceil + 1
= m - \Big\lfloor \frac{p}{2} \Big\rfloor + \Big\lceil \frac{i-2}{2} + 1 \Big\rceil
= m - \Big\lfloor \frac{p}{2} \Big\rfloor + \Big\lceil \frac{i}{2} \Big\rceil . \]
Now suppose that $e_i \in E_0$. In this case, $e_{i-2}\in E_0$ and, by hypothesis, $\phi (e_{i-2})=\big\lceil \frac{i-2}{2} \big\rceil+|U_{i-2}|$.
Note that $e_{i-1} \in E_1$ and, then, the iteration for $i-1$ produces $U_{i-1} = U_{i-2}$.
If condition $Q(i)$ does not hold, then lines 10--12 are not executed. Thus, $|U_i|=|U_{i-1}|$ and $\phi(e_i) = \phi(e_{i-2})+1$, and we get  
\[ \phi(e_i) = \Big\lceil \frac{i-2}{2} \Big\rceil +|U_{i-2}|+ 1
=  \Big\lceil \frac{i-2}{2} + 1 \Big\rceil +|U_{i-2}|
=  \Big\lceil \frac{i}{2} \Big\rceil +|U_i|. \]
Otherwise, after the execution of lines 10--12 we have that  $\phi(e_i) = \phi (e_{i-2}) + 2$ and $|U_i| = |U_{i-1}|+1$. Since $e_{i-2} \in E_0$ and, by hypothesis, $\phi(e_{i-2}) = \big\lceil \frac{i-2}{2} \big\rceil + |U_{i-2}|$, we obtain
\[ \phi(e_i) = \Big\lceil \frac{i-2}{2} \Big\rceil + |U_{i-2}| + 2
= \Big\lceil \frac{i-2}{2} + 1 \Big\rceil + |U_{i-2}| + 1
= \Big\lceil \frac{i}{2} \Big\rceil + |U_i|. \]
In both of the above cases, the value $\phi(e_i)$ computed by the algorithm coincides with that defined in Equation~\ref{eq}.

\medskip
Now, we prove the first item.
On the one hand, the values obtained for the edges in $E_0$ range between 1 and $\lceil p/2 \rceil + |U_p|$ by Equation~\ref{eq},  and the value of a leg $f_k\in F_0$ (i.e., a leg incident to a light vertex) is less than the value of some edge belonging to $E_0$. Hence, $\phi (E_0\cup F_0)\subseteq [1,\lceil p/2 \rceil + |U_p|]$. 
Since $\phi$ is an injection and $|E_0\cup F_0|= \lceil p/2 \rceil + |U_p| $, we have $\phi(E_0 \cup F_0 )=[1,\ceil{p/2}+|U_p| ]$. On the other hand, $|E_0\cup F_0|\le m-|E_1|=m-\floor{p/2}$. Thus,  $ \phi(E_0 \cup F_0 )=[1,\ceil{p/2}+|U_p| ]\subseteq L_0$.

As for the second item, since  $|E_1|=\floor{p/2}=|L_1|$, $\phi (E_1)\subseteq L_1$ and $\phi$ is an injection, we have that $\phi(E_1)=L_1=[m-\floor{p/2} +1 , m ]$. Finally, in Steps 2 and 3, the algorithm assigns the remaining labels to the remaining edges. Therefore, $\phi ( E_2 )= [1,m]\setminus \phi (E_0\cup F_0\cup E_1) =[\ceil{p/2}+|U_p|+1,m-\floor{p/2}]\subseteq L_0$ and the third item is true.
\end{proof}

\goodbreak

\begin{lemma}\label{lemma:ineq} For any $i, j$ such that $1 \le i \le  j < p$, we have:
	\begin{enumerate}[(1)]
		\item 
		$m - \lfloor p/2 \rfloor + 2 \le \phi(e_i)+\phi(e_{i+1}) \le m + \lceil p/2 \rceil + |U_p|;$
		\item If $i<j$, then 
		$ \phi(e_i) + \phi(e_{i+1}) < \phi(e_j) + \phi(e_{j+1})$;
		\item 
		If  $v_i$ and $v_j$ are  light vertices and $i<j$, then  $\phi(f_i) < \phi(f_j)$.
	\end{enumerate} 
\end{lemma}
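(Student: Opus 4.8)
The plan is to exploit the fact that any two consecutive path edges lie in different parts of the partition $\{E_0,E_1\}$, so that in every pair $\{e_i,e_{i+1}\}$ exactly one edge is labeled by the $E_0$‑formula and the other by the $E_1$‑formula of Lemma~\ref{lemma:values}. First I would record the elementary identity $\lceil i/2\rceil+\lceil (i+1)/2\rceil=i+1$ (valid for every $i$, since one of $i,i+1$ is even) and substitute the two formulas of Equation~\ref{eq} into $\phi(e_i)+\phi(e_{i+1})$. Writing $c(i)\in\{i,i+1\}$ for the index of the unique $E_0$‑edge of the pair, both cases collapse to the single closed form
$$\phi(e_i)+\phi(e_{i+1})=m-\lfloor p/2\rfloor+(i+1)+|U_{c(i)}|.$$
Part (1) is then immediate: since $|U_{c(i)}|\ge 0$ and $i\ge 1$ the sum is at least $m-\lfloor p/2\rfloor+2$, while from $i+1\le p$, $|U_{c(i)}|\le|U_p|$ and $p-\lfloor p/2\rfloor=\lceil p/2\rceil$ it is at most $m+\lceil p/2\rceil+|U_p|$.

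For part (2) I would prove the sharper statement that $i\mapsto\phi(e_i)+\phi(e_{i+1})$ is strictly increasing, which yields the claim by transitivity. From the closed form the consecutive difference equals $1+\big(|U_{c(i+1)}|-|U_{c(i)}|\big)$. A short case check on the parity of $e_i$ shows $c(i+1)\ge c(i)$ in both situations (if $e_i\in E_0$ then $c(i)=i$ and $c(i+1)=i+2$; if $e_i\in E_1$ then $c(i)=c(i+1)=i+1$). Since the sets $U_\bullet$ are nested and hence $|U_\bullet|$ is nondecreasing, the parenthesized term is $\ge 0$, so the difference is at least $1$.

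The main obstacle is part (3), because it concerns the order of the \emph{leg} labels $\phi(f_k)$, which are assigned by the triggering mechanism of Step 1 rather than by an explicit formula; the point is to connect the index order of the light vertices with the order in which their legs receive labels. My approach: a light vertex $v_k$ has its leg labeled precisely at the iteration $i$ for which $Q(i)$ fires, i.e.\ $\phi(e_{i-1})=\phi(e_k)+\phi(e_{k+1})$. Given two light vertices $v_a,v_b$ with $a<b$, part (2) gives $\phi(e_a)+\phi(e_{a+1})<\phi(e_b)+\phi(e_{b+1})$, so the $E_1$‑edge value $\phi(e_{i-1})$ needed to trigger $f_b$ is strictly larger than the one triggering $f_a$. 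As $\phi(e_{i-1})$ is strictly increasing along the $E_1$‑edges (by Equation~\ref{eq}), the triggering iteration of $f_b$ is strictly later than that of $f_a$. Finally, throughout Step 1 the labels of $L_0$ are handed out in increasing order (so that $\phi(E_0\cup F_0)$ is the initial block $[1,\lceil p/2\rceil+|U_p|]$ of Lemma~\ref{lemma:values}(1)), whence a leg labeled at a later iteration gets a larger label; therefore $\phi(f_a)<\phi(f_b)$. A minor point to check along the way is that at most one leg is labeled per iteration, which again follows from the strict monotonicity of part (2), since $\phi(e_{i-1})$ can equal $\phi(e_k)+\phi(e_{k+1})$ for at most one $k$.
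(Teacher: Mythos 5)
Your proposal is correct and follows essentially the same route as the paper: you substitute the formulas of Lemma~\ref{lemma:values} into the sum of consecutive pathedge labels (your closed form $m-\lfloor p/2\rfloor+(i+1)+|U_{c(i)}|$ is a mildly streamlined version of the paper's expression with an $h\in[0,|U_p|]$ term), prove (2) by monotonicity using nestedness of the sets $U_i$, and obtain (3) by comparing the trigger values via item (2) and the increasing order of labels along $E_1$, exactly as the paper does. The only cosmetic difference is that the paper concludes (3) through the explicit relation $\phi(f_k)=\phi(e_{i'-2})+1$ together with $\phi(e_{i'-2})<\phi(e_{j'-2})$, whereas you invoke the globally increasing assignment of $L_0$ labels during Step 1; both are valid and rest on the same facts.
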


\begin{proof}
We begin by proving the first item. 
Since one of the edges from $\{e_i, e_{i+1} \}$ belongs to $E_0$ and the other one to $E_1$,  according to Equation~\ref{eq} in Lemma~\ref{lemma:values}, we have 
	$$\phi(e_i)+\phi(e_{i+1})=m - \Big \lfloor \frac{p}{2} \Big \rfloor +\Big \lceil \frac{i}{2} \Big\rceil +\Big\lceil \frac{i+1}{2} \Big\rceil +h, $$
	for some  $h$, $0\le h\le |U_p|$. Hence, the lower bound trivially holds. For the upper bound, note that
	$$m - \Big \lfloor \frac{p}{2} \Big\rfloor +\Big\lceil \frac{i}{2} \Big\rceil +\Big\lceil \frac{i+1}{2} \Big \rceil +h\le 
	m - \Big\lfloor \frac{p}{2} \Big\rfloor +\Big\lceil \frac{p-1}{2} \Big\rceil +\Big\lceil \frac{p}{2} \Big\rceil +|U_p|\le 
	m + \Big\lceil \frac{p}{2} \Big\rceil +|U_p|.
	$$

To prove the second item, let $g: [1,p-1] \rightarrow \N$ be the function 
\[g(i) = \phi(e_i) + \phi(e_{i+1}).\]
Now, it is enough to show that function $g$ is strictly increasing. Note that for any  $i$ such that $1 \le i \le  p-2$,  proving $g(i)<g(i+1)$ is equivalent to proving the inequality:
\begin{equation}\label{eq2}
\phi(e_i) < \phi(e_{i+2})
\end{equation}
whenever $1\le i\le p-2$.
Hence, we consider two cases according to the value of $\phi $ for  pathedges given in Equation~\ref{eq}.
If $e_i \in E_0$, then
\[ \phi(e_i) < \phi(e_{i+2}) \,\, \Leftrightarrow  \,\,
\Bigl\lceil \frac{i}{2} \Bigr\rceil + |U_i| < \Bigl\lceil \frac{i+2}{2} \Bigr\rceil + |U_{i+2}|  \,\,\Leftrightarrow  \,\, |U_i| < 1+ |U_{i+2}|,  \]
which is true. Now, if $e_i \in E_1$, then
\[ \phi(e_i) < \phi(e_{i+2})  \,\, \Leftrightarrow  \,\,
m - \Bigl\lfloor \frac{p}{2} \Bigr\rfloor + \Bigl\lceil \frac{i}{2} \Bigr\rceil < m -  \Bigl\lfloor \frac{p}{2} \Bigr\rfloor + \Bigl\lceil \frac{i+2}{2} \Bigr\rceil  \,\, \Leftrightarrow  \,\, \Bigl\lceil \frac{i}{2} \Bigr\rceil < \Bigl\lceil \frac{i+2}{2} \Bigr\rceil, \]
which is also true. 
This concludes the proof of the second item.

Let us now  prove the third item. Suppose that $1\le i<j\le p-1$.
Notice that labels $\phi(f_i)$ and $\phi(f_j)$ are assigned in the first step of the algorithm if $Q(i')$ and $Q(j')$ hold when assigning the label of some  pathedges $e_{i'}$ and $e_{j'}$ of $E_0$, with $i' > i$ and $j' > j$, and  such that $\phi(e_{i'-1}) = \phi(e_i) + \phi(e_{i+1})$ and $\phi(e_{j'-1}) = \phi(e_j) + \phi(e_{j+1})$.
On the one hand, since $i<j$,
we deduce by the preceding item that
$\phi(e_{i'-1}) < \phi(e_{j'-1})$. 
Besides, $e_{i'-1},e_{j'-1}\in E_1$, and from Inequality~\ref{eq2} 
we deduce $i'<j'$.
On the other hand, the labels assigned to the edges $f_i$ and $f_j$ are  $\phi (f_i) =\phi(e_{i'-2})+1$ and $\phi (f_j) =\phi(e_{j'-2})+1$, where $e_{i'-2},e_{j'-2}\in E_0$ and $i'-2<j'-2$. Again, from Inequality~\ref{eq2}, we conclude that $ \phi (f_i)=\phi(e_{i'-2})+1<\phi(e_{j'-2})+1=\phi (f_j)$.
\end{proof}

\subsection{Proof of Correctness}\label{subsec:correctness}

Now we show the correctness of the algorithm, that is, we show that, given a caterpillar $C$, Algorithm 1 produces an antimagic labeling of $C$. 

Since $E(C)=E_0\cup E_1\cup E_2\cup F_0$, we have by Lemma~\ref{lemma:values} that  $\phi$ is a bijection from $E(C)$ onto $[1,m]$.
To prove that $\phi$ is an antimagic labeling, it remains to show that all vertex sums are pairwise different.

Consider the partition $\{V_1,V_2,V_3\}$ of $V(C)$, where $V_1$ is the set of vertices of degree 1, $V_2$ is the set of vertices of degree 2 and light vertices, and $V_3$ is the set of heavy vertices. Let 
$s(V_i) = \{s(v) : v \in V_i\}$, for $1 \le i \le 3$. It is enough to prove that for each $i$, $1 \le i \le 3$, all sums in $s(V_i)$ are pairwise distinct and the sets $s(V_1)$, $s(V_2)$, $s(V_3)$ are pairwise disjoint.

\medskip
We begin by checking that the vertices of $V_1$ have pairwise distinct vertex sums.
The vertex sum at a vertex of degree one is the value of the label of the pendant edge incident to it.
Since all edges receive different labels, vertex sums at vertices of degree one are pairwise distinct.

Now let us establish an interval of possible values in $s(V_1)$. Notice that a pendant edge is either a leg, or the first or the last edge of the path $P$.
According to Lemma~\ref{lemma:values}, legs are always labeled with elements from $L_0$. Regarding the first and last edges of the path $P$, if $p$ is odd, their labels are from $L_0$, whereas if $p$ is even, these pathedges are labeled with the smallest values of $L_1$ and $L_0$, respectively, that is, $m-\lfloor p/2 \rfloor +1$ and $1$. Hence, $s(V_1)\subseteq  L_0\cup \{ m-\lfloor p/2 \rfloor +1 \}=[1,m-\lfloor p/2 \rfloor +1]$.

\medskip
In order to check that all vertices of $V_2$ have pairwise distinct sums, consider two distinct vertices $v_j, v_k \in V_2$. Now, we distinguish three cases. In the first case, suppose that $v_j$ and $v_k$ are vertices of degree 2. If $1\le j < k<p$, then their sums can be expressed as 
\[  s(v_j) = \phi(e_j) + \phi(e_{j+1}), \; \; \; s(v_k) = \phi(e_k) + \phi(e_{k+1}), \]
and by Lemma~\ref{lemma:ineq}, $s(v_j) < s(v_k)$.
In the second case, suppose that both $v_j$ and $v_k$ are light vertices. If $1\le j < k<p$, then, similarly to the first case, their sums are 
\[ s(v_j) = \phi(e_j) + \phi(e_{j+1}) + \phi(f_j), \; \; \; s(v_k) = \phi(e_k) + \phi(e_{k+1}) + \phi(f_k)\]
and, by Lemma~\ref{lemma:ineq} we conclude that $s(v_j) < s(v_k)$. In the third case, suppose that one of the vertices, say $v_j$, has degree 2 and the other one, that is $v_k$, is a light vertex. In such a case,  we know that $Q(i)$ holds for some $i$, with $k+2<i < p$, that is, 
\[\phi(e_k)+\phi(e_{k+1}) = \phi(e_{i-1}),
\,\, \phi(f_k) = \phi(e_{i-2})+1, 
\hbox{ and }
\phi(e_i) = \phi(e_{i-2})+2 .\]
Besides, 
\[s(v_k) = \phi(e_k)+\phi(e_{k+1})+\phi(f_k)\hbox{ and }s(v_j)= \phi(e_j)+\phi (e_{j+1} ) .\]
Now, by applying Lemma~\ref{lemma:ineq}, if $ j\le k$, then
\begin{align*}
s(v_j)&=\phi(e_j)+\phi (e_{j+1})\le \phi(e_k)+\phi(e_{k+1})\\
&< \phi(e_k)+\phi(e_{k+1})+\phi(f_k) =s(v_k);
\end{align*}
if $k<j\le  i-2$, then
\begin{align*}
 s(v_j)&=\phi(e_j)+\phi (e_{j+1})\le \phi(e_{i-2})+\phi(e_{i-1}) \\
&< \phi(e_{i-1})+\phi(e_{i-2})+1\\
&= \phi(e_k)+\phi(e_{k+1})+\phi(f_k)=s(v_k);
\end{align*}
and if $k<i-1\le j$, then
\begin{align*}
s(v_j)&= \phi (e_j)+\phi(e_{j+1})  
\ge \phi (e_{i-1})+\phi(e_{i})\\
&= \phi (e_{i-1})+ \phi(e_{i-2}) +2\\
&= \phi(e_k) + \phi(e_{k+1}) + \phi(f_k)+1>s(v_k).
\end{align*}
Hence, all vertices in $V_2$ have different vertex sums.

We now determine the interval of possible values in $s(V_2)$. 
Let $v_k\in V_2$. For the lower bound, we have $s(v_k) \ge \phi(e_k)+\phi(e_{k+1}) \ge m - \lfloor p/2 \rfloor +2$ by Lemma~\ref{lemma:ineq}. For the upper bound, again by Lemma~\ref{lemma:ineq}, the vertex sum at any vertex of degree two can be bounded by $m + \lceil p/2 \rceil + |U_p|$,  while if $v_k$ is a light vertex, then for some $i$ with $k+2<i\le p$ and $\modul ip$ we have:
\[s(v_k) = \phi(e_k)+\phi(e_{k+1})+\phi(f_k) = \phi(e_{i-1}) + \phi(f_k) = \phi(e_{i-1}) + \phi(e_{i-2})+1 < m + \Big\lceil \frac{p}{2} \Big\rceil + |U_p| +1.\]
Therefore, $s(V_2) \subseteq  [m - \lfloor p/2 \rfloor+2,m + \lceil p/2 \rceil + |U_p|]$.

\medskip
Finally, recall that $V_3$ is the set of heavy vertices. Due to the way labels are assigned in Steps 2 and 3, all vertex sums of heavy vertices will be pairwise distinct. 

Let us calculate the interval of possible values of their vertex sums.
Let $v_i$ be a heavy vertex and suppose, in the first place, that $d(v_i) = 3$. Since, by construction, the leg of $v_i$ has not been labeled in Step 1, the partial sum of $v_i$ after Step 1 must be at least $m+1$. Indeed, suppose on the contrary that $\phi(e_i)+\phi (e_{i+1})\le m $. By Lemma~\ref{lemma:ineq}, we would have  $\phi(e_i)+\phi (e_{i+1})=\ell \in [ m - \bigl\lfloor p/2 \bigr\rfloor + 2, m]\subseteq L_1$, so that after assigning the label  $\ell$ to some edge $e_{j-1}$ of $E_1$ in Step 1, condition $Q(j)$ would have held for some $j$, with $j>i$, implying that the leg incident to $v_i$ would have been labeled in Step 1, a contradiction. Hence, $\phi(e_i)+\phi (e_{i+1})\ge  m +1$.
By Lemma~\ref{lemma:values}, the labels for its leg are at least $\lceil p/2 \rceil + |U_p| + 1$. Therefore, 
\[s(v_i) \ge m + \Big\lceil \frac{p}{2} \Big\rceil + |U_p| + 2.\]
Suppose now that $d(v_i) \ge 4$. Then, since there are at least two legs incident with $v_i$, we have
\begin{align*}
	 s(v_i) &\ge \phi(e_i) + \phi(e_{i+1}) + \Big(\Big\lceil \frac{p}{2} \Big\rceil + |U_p|+1 \Big) + \Big(\Big\lceil \frac{p}{2} \Big\rceil + |U_p| + 2 \Big)\\
	 &\ge  m - \Big\lfloor \frac{p}{2} \Big\rfloor + 2 + 2 \cdot \Big\lceil \frac{p}{2} \Big\rceil + 2 |U_p| + 3 \ge m + \Big\lceil \frac{p}{2} \Big\rceil + 2 |U_p| + 5. 
\end{align*}
Hence, $s(V_3) \subseteq [m + \lceil p/2 \rceil + |U_p| + 2,\infty)$.

\medskip
Notice that $s(V_1)$, $s(V_2)$, and $s(V_3)$ have been shown to be included in pairwise disjoint intervals and, as a consequence, they must also be pairwise disjoint.

\goodbreak

\subsection{Proof of Efficiency}\label{subsec:efficiency}

Finally, we show that Algorithm 1 runs in time $O(n \log n)$. 

Assignments in Step 1 of the algorithm can be done in constant time, except for the one at line 13, which requires computing a set difference and can be done in time $O(m)$. Condition in line 9 (which is equivalent to the fact that both $e_i \in E_0$ and $Q(i)$ hold) can be checked in linear time globally due to the fact that partial sums are increasing as variable $i$ increases, as shown in Lemma~\ref{lemma:ineq}. Therefore, the cost of Step 1 is the result of the linear loop at lines 6--12 and the assignment at line 13, that is, $O(m)$. Step 2 visits at most $m$ edges and assigns a random label to each of them, but labels can be chosen increasingly from the unused labels in $L_0$, thus giving a cost $O(m)$. Step 3 requires time $O(m \log m)$ due to fact that the partial vertex sums must be sorted (line 17).

The total cost of the algorithm is, then, $O(m \log m)$, but since $C$ is a tree, $n = m+1$ and the cost can be expressed as $O(n \log n)$.

\bigskip

Theorem~\ref{th:caterpillars} now follows from the proofs of correctness and efficiency contained, respectively, in Subsections~\ref{subsec:correctness} and~\ref{subsec:efficiency}.

\section{Conclusions and Open Problems}

We consider a consequence of our main result regarding oriented graphs. Define a {\em labeling} of a directed graph $D$ with $m$ arcs as a bijection from the set of arcs of $D$ to $[1,m]$. A labeling of $D$ is said to be {\em antimagic} if all oriented vertex sums are pairwise distinct, where the {\em oriented vertex sum} of a vertex in $D$ is the sum of labels of all incoming arcs minus that of all outgoing arcs. A graph is said to have an {\em antimagic orientation} if it has an orientation which admits an antimagic labeling. Hefetz, M\"utze, and Schwartz~\cite{HMS} formulate the following conjecture.

\begin{conjecture}\label{con:hef} \cite{HMS}
Every connected graph admits an antimagic orientation.
\end{conjecture}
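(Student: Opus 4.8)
The plan is to exploit the extra freedom that orientations provide — the \emph{sign} of each label's contribution — in order to reduce the general conjecture to a controlled labeling problem on a spanning structure. First I would fix a spanning tree $T$ of the connected graph $G$ and root it at a leaf, so that every non-root vertex has a unique parent edge. Writing $m=|E(G)|$ and $n=|V(G)|$, the non-tree edges $E(G)\setminus E(T)$ number $m-n+1$; the idea is to reserve the largest labels for these edges and to orient them so as to ``pre-load'' each incident vertex with a large, controllable signed contribution, leaving the tree edges (carrying the smaller labels) to act as fine adjustments. Because a tree has leaves, one can then attempt a bottom-up pass along $T$ in which the orientation of each tree edge is chosen to push its two endpoints' oriented sums into disjoint target ranges, in the same spirit as the present paper partitions $V(C)$ into the degree classes $V_1,V_2,V_3$ and confines their sums to pairwise disjoint intervals.

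Second, I would isolate the genuinely delicate local configurations, which — exactly as in the undirected treatment above — are concentrated at vertices of degree two and, more generally, at vertices whose incident sums are easily forced to collide. Here the sign freedom is the main new leverage: at a degree-two vertex incident to labels $a$ and $b$, the oriented sum can be any of $a+b$, $a-b$, $-a+b$, or $-a-b$, so four candidate values are available instead of the single value $a+b$ of the unoriented case, and I would use this to break up the long chains of degree-two vertices that required the light/heavy machinery of Theorem~\ref{th:caterpillars}. The base cases of this local analysis — caterpillars, and more generally trees — could be handled either by adapting the constructive algorithm of Section~\ref{section:algorithm} with $s(u)$ replaced by the signed vertex sum, or by a Combinatorial Nullstellensatz argument certifying, piece by piece, the existence of a compatible orientation and labeling.

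The hard part, and the reason the statement remains a conjecture rather than a theorem, is the \emph{global coordination} step: one must guarantee that the signed sums are pairwise distinct across the \emph{entire} graph simultaneously, not merely within each class. Unlike a caterpillar, a general connected graph has no canonical backbone, its non-tree edges can create dense local pockets, and the greedy interval-confinement argument collapses as soon as a vertex is incident to several non-tree edges whose orientations interact with one another. I expect that pushing the plan through would demand either a strong structural decomposition of $G$ (for instance into an Eulerian part, whose edges can be oriented to keep in- and out-contributions locally balanced, plus a spanning forest), or a polynomial-method argument whose nonvanishing-coefficient hypothesis can be verified uniformly over all connected graphs. Controlling the orientation choices and the label assignment at once, so that no two signed sums coincide anywhere in $G$, is precisely the obstacle that no known technique overcomes in full generality; the reduction and the degree-two analysis above are, at present, only a route toward the known partial cases rather than a complete proof.
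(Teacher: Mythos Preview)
The statement you are attempting to prove is Conjecture~\ref{con:hef}, an \emph{open} conjecture due to Hefetz, M\"utze, and Schwartz; the paper does not prove it and does not claim to. It is merely quoted in Section~4 in order to derive, via the standard observation that a bipartite antimagic graph inherits an antimagic orientation by directing all edges from one colour class to the other, the corollary that caterpillars admit antimagic orientations. There is therefore no ``paper's own proof'' to compare your proposal against.

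Your write-up is not a proof either, and to your credit you say so explicitly in the final paragraph: the global coordination step is identified as the obstruction ``that no known technique overcomes in full generality.'' What you have produced is a research outline --- spanning tree plus sign freedom plus interval confinement --- together with an honest assessment of where it breaks down. That is a reasonable sketch of why the conjecture is believed and of which partial cases (trees, bipartite graphs, Eulerian pieces) are tractable, but it is not a candidate proof and should not be presented as one. If the assignment was to supply a proof of the highlighted statement, the correct answer is that none exists in the paper (or in the literature); if the assignment was to discuss approaches, your text is appropriate but should be labelled as heuristic discussion rather than as a proof proposal.
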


As the authors point out in the paper, every bipartite antimagic undirected graph $G$ admits an antimagic orientation by simply orienting all edges in the same direction between the two stable sets of $G$. Therefore, we can derive the following corollary from Theorem~\ref{th:caterpillars}, which was originally proved in~\cite{L}.

\begin{corollary} \cite{L}
Caterpillars have antimagic orientations.
\end{corollary}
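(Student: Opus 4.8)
The plan is to exploit the fact that every caterpillar is a tree, hence bipartite, together with the antimagic labeling guaranteed by Theorem~\ref{th:caterpillars}, and then apply the generic observation recorded just above: a bipartite antimagic graph can be oriented so as to yield an antimagic orientation. First I would fix a caterpillar $C$ with $m=|E(C)|$ edges and a bipartition $\{A,B\}$ of its vertex set into two stable sets; such a bipartition exists because $C$ is a tree and therefore contains no odd cycle. By Theorem~\ref{th:caterpillars}, $C$ admits an antimagic labeling $\phi:E(C)\to[1,m]$, so the vertex sums $s(v)=\sum_{e\in E_C(v)}\phi(e)$ are pairwise distinct. I would keep this same labeling $\phi$ and orient every edge of $C$ from its endpoint in $A$ to its endpoint in $B$; this orientation is well defined precisely because no edge has both endpoints in the same stable set.

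The key step is to compute the oriented vertex sums under this orientation. Every vertex $u\in A$ has all of its incident edges directed away from it, so its oriented vertex sum equals $-s(u)$; symmetrically, every vertex $v\in B$ has all of its incident edges directed into it, so its oriented vertex sum equals $s(v)$. Since $C$ is connected with at least three vertices, every vertex has degree at least one, and all labels are positive, whence $s(w)\ge 1$ for every vertex $w$. Consequently the oriented sums arising from $A$ form the set $\{-s(u):u\in A\}$ of strictly negative integers, while those arising from $B$ form the set $\{s(v):v\in B\}$ of strictly positive integers.

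Finally I would combine injectivity with this sign separation. Within $A$ the map $u\mapsto -s(u)$ is injective and within $B$ the map $v\mapsto s(v)$ is injective, both inherited from the pairwise distinctness of the $s$-values, and the two resulting sets are disjoint since one consists of negatives and the other of positives. Hence all oriented vertex sums are pairwise distinct, which is exactly the assertion that the orientation is antimagic. I do not expect any genuine obstacle here: the only points requiring care are the well-definedness of the edge orientation (secured by the bipartition) and the fact that no vertex sum is zero (secured by connectivity together with positivity of the labels), and both are immediate for caterpillars. This argument is exactly the bipartite strategy already noted for Conjecture~\ref{con:hef}, now made available for caterpillars through Theorem~\ref{th:caterpillars}.
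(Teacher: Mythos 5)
Your proposal is correct and is exactly the argument the paper uses: it invokes the observation of Hefetz, M\"utze, and Schwartz that any bipartite antimagic graph becomes antimagically oriented by directing all edges from one stable set to the other, combined with Theorem~\ref{th:caterpillars}. Your write-up merely spells out the sign-separation details (oriented sums $-s(u)$ on one side, $s(v)$ on the other, all nonzero and pairwise distinct) that the paper leaves implicit.
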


Lobsters, defined as trees such that the removal of their leaves produces a caterpillar, are a natural class of trees to which one can try to apply the techniques presented in this paper. A first question in the line of the above corollary would be the following.

\begin{open}
Do lobsters have antimagic orientations? 
\end{open}

More generally, we state the following question.

\begin{open}
Are lobsters antimagic?
\end{open}

\section{Acknowledgments}
Antoni Lozano is supported by the European Research Council (ERC) under the European Union's Horizon 2020 research and innovation programme (grant agreement ERC-2014-CoG 648276 AUTAR). 
Merc\`e Mora is  supported by projects Gen. Cat. DGR 2017SGR1336, MINECO MTM2015-63791-R, and H2020-MSCA-RISE project 734922-CONNECT. 
Carlos Seara is  supported by projects Gen. Cat. DGR 2017SGR1640, MINECO MTM2015-63791-R, and H2020-MSCA-RISE project 734922-CONNECT. 
Joa\-qu\'in Tey is supported by project PRODEP-12612731.


\begin{thebibliography}{XX}

\bibitem{A} N. Alon. Combinatorial Nullstellensatz. \emph{Combinatorics, Probability and Computing}, 8, (1999), 7--29.

\bibitem{AKLRY} N. Alon, G. Kaplan, A. Lev, Y. Roditty, and R. Yuster. Dense graphs are antimagic. \emph{Journal of Graph Theory}, 47(4), (2004), 297--309.

\bibitem{AMPR} S. Arumugam, M. Miller, O. Phanalasy, and J. Ryan. Antimagic labeling of generalized pyramid graphs. \emph{Acta Mathematica Sinica}, 30(2), (2014), 283--290.

\bibitem{ChLZ} G. Chartrand, L. Lesniak,  and P. Zhang. {\em Graphs and Digraphs}, fifth edition. CRC Press, Boca Raton, 2011.

\bibitem{ChLPZ} F. Chang, Y.-Ch. Liang, Z. Pan, and X. Zhu. Antimagic labeling of regular graphs. \emph{Journal of Graph Theory}, 82(4), (2016), 339--349.

\bibitem{Ch07} Y. Cheng. Lattice grids and prisms are antimagic. \emph{Theoretical Computer Science}, 374(1-3), (2007), 66--73.

\bibitem{Ch08} Y. Cheng. A new class of antimagic Cartesian product graphs. \emph{Discrete Mathematics}, 308(24), (2008), 6441--6448.

\bibitem{C} D. W. Cranston. Regular bipartite graphs are antimagic. \emph{Journal of Graph Theory}, 60(3), (2009), 173--182.

\bibitem{CLZ} D. W. Cranston, Y.-Ch. Liang, and X. Zhu. Regular graphs of odd degree are antimagic. \emph{Journal of Graph Theory}, 80(1), (2015), 28--33.

\bibitem{DL}  K. Deng and Y. Li. {Caterpillars with maximum degree 3 are antimagic.}
\emph{Discrete Mathematics}, 342, (2019), 1799--1801

\bibitem{E} T. Eccles. Graphs of large linear size are antimagic. \emph{Journal of Graph Theory}, 81(3), (2016), 236--261.

\bibitem{G} J. A. Gallian. A dynamic survey of graph labeling. \emph{The Electronic Journal of Combinatorics}, 5, (2018), \#DS6.

\bibitem{HR} N. Hartsfield and G. Ringel. {\em Pearls in Graph Theory: A Comprehensive Introduction}. Academic Press, INC., Boston, 1990 (revised version, 1994), 108--109.

\bibitem{H} D. Hefetz. Antimagic graphs via the Combinatorial NullStellenSatz. \emph{Journal of Graph Theory}, 50(4), (2005), 263--272.

\bibitem{HMS} D. Hefetz, T. M\"utze, and J. Schwartz. On antimagic directed graphs. {\em Journal of Graph Theory}, {64} (2010), 219--232.

\bibitem{KLR} G. Kaplan, A. Lev, and Y. Roditty. On zero-sum partitions and antimagic trees. \emph{Discrete Mathematics}, 309, (2009), 2010--2014.

\bibitem{LM} A. Llad\'o and M. Miller. Approximate results for rainbow labelings. \emph{Periodica Mathematica Hungarica}, 74(1), (2017), 11--21.

\bibitem{L} A. Lozano. Caterpillars Have Antimagic Orientations. {\em An. St. Univ. Ovidius Constanta Ser. Mat.}, 26(3), (2018), 171--180.

\bibitem{LMS} A. Lozano, M. Mora, and C. Seara. Antimagic labelings of caterpillars. \emph{Applied Mathematics and Computation}, 347, (2019), 734--740.

\bibitem{LWZ} Y.-Ch. Liang, T.-L. Wong, and X. Zhu. Antimagic labeling of trees. \emph{Discrete Mathematics}, 331, (2014), 9--14.

\bibitem{LZ} Y.-Ch. Liang and X. Zhu. Antimagic labeling of cubic graphs. \emph{Journal of Graph Theory}, 75 (1), (2014), 31--36.

\bibitem{W} T.-M. Wang. Toroidal grids are antimagic. \emph{Proc. 11th Annual International Computing and Combinatorics Conference, COCOON'2005}, LNCS 3595, (2005), 671--679.

\bibitem{WH} T.-M. Wang and C.-C. Hsiao. On anti-magic labeling for graph products. \emph{Discrete Mathematics}, 308(16), (2008), 3624--3633.

\bibitem{Y} Z. B. Yilma. Antimagic properties of graphs with large maximum degree. \emph{Journal of Graph Theory}, 72(4), (2013), 367--373.

\bibitem{ZS} Y. Zhang and X. Sun. The antimagicness of the Cartesian product of graphs. \emph{Theoretical Computer Science}, 410(8--10), (2009), 727--735.

\end{thebibliography}
\end{document}